\documentclass[12pt,reqno]{amsart}

\usepackage[cp1251]{inputenc}
\usepackage[T2A]{fontenc}
\usepackage[english,russian]{babel}
\usepackage{amsmath, amsthm, amscd, amsfonts, amssymb, graphicx, color}
\usepackage{geometry}
\usepackage{cite}
\textheight625pt \textwidth475pt \oddsidemargin0pt \evensidemargin0pt
\topmargin-15pt \headheight15pt \headsep15pt \tolerance=4000

\theoremstyle{plain}
\newtheorem{theorem}{Теорема}[section]
\newtheorem{lemma}{Лемма}[section]
\newtheorem{proposition}{Предложение}[section]

\theoremstyle{definition}

\numberwithin{equation}{section}

\begin{document}

\begin{flushleft}


\textbf{Tetiana Zinchenko and Aleksandr Murach}\\
\small(Institute of Mathematics, National Academy of Sciences of Ukraine, Kyiv)

\medskip

\large\textbf{PETROVSKII ELLIPTIC SYSTEMS IN THE EXTENDED SOBOLEV SCALE}\normalsize

\medskip\medskip

\textbf{Татьяна Зинченко и Александр Мурач}\\
\small(Институт математики НАН Украины, Киев)

\medskip

\large\textbf{ЭЛЛИПТИЧЕСКИЕ ПО ПЕТРОВСКОМУ СИСТЕМЫ В РАСШИРЕННОЙ СОБОЛЕВСКОЙ ШКАЛЕ}\normalsize

\end{flushleft}

\medskip

\noindent Petrovskii elliptic systems of linear differential equations given on a closed smooth manifold are investigated on the extended Sobolev scale. This scale consists of all Hilbert spaces that are interpolation spaces with respect to the Hilbert Sobolev scale. Theorems on the sol\-va\-bi\-lity of the elliptic systems on the extended Sobolev scale are proved. An \textit{a priori} estimate for solutions is obtained,
and their regularity is studied.

\medskip

\noindent Эллиптические по Петровскому системы линейных ди\-ф\-ференциальных уравнений, заданные на замкнутом гладком многообразии, исследованы в расширенной соболевской шкале. Она состоит из всех гильбертовых пространств, интерполяционных относительно
гильбертовой соболевской шкалы. Доказаны теоремы о разрешимости эллиптических систем в расширенной соболевской шкале. Получена априорная оценка решений и исследована их локальная регулярность.

\bigskip



\section{Введение}\label{Sec1}
Хорошо известна фундаментальная роль пространств Соболева в теории уравнений с
частными производными, особенно в теории эллиптических дифференциальных уравнений.
Линейные эллиптические операторы обладают рядом характерных свойств в шкалах
соболевских пространств: фредгольмовость (т.~е. конечность индекса уравнения),
априорные оценки решений, локальное повышение регулярности решений. Эти свойства
имеют важные приложения, причем наиболее содержательные результаты получаются в
гильбертовом случае (см., например, обзоры \cite{Agranovich94, Agranovich97}).

В этой связи несомненный интерес представляют гильбертовы функциональные
пространства, интерполяционные относительно ги\-льбертовой соболевской шкалы.
Поскольку при интерполяции наследуется ограниченность линейных операторов, а также
их фредгольмовость, то шкалы этих пространств служат эффективным инструментом для
исследования эллиптических операторов.

Класс \emph{всех} гильбертовых пространств, интерполяционных относительно
гильбертовой соболевской шкалы был выделен и изучен в \cite{09Dop3, 13UMJ3} и
\cite[п.~2.4]{MikhailetsMurach10}. При этом были использованы фундаментальные
теоремы Ж.~Питре \cite{Peetre66, Peetre68} и В.~И.~Овчинникова
\cite[п.~11.4]{Ovchinnikov84} теории интерполяции пространств. Этот класс образован
пространствами Хермандера \cite[п. 2.2]{Hermander63}, для которых показателем
гладкости служит произвольная радиальная функция, RO-меняющаяся на $+\infty$. Такой
класс естественно именовать расширенной соболевской шкалой (посредством
интерполяционных пространств). В упомянутых работах \cite{09Dop3,
MikhailetsMurach10} найдены ее применения к скалярным эллиптическим операторам.

В настоящей статье исследованы эллиптические по Петровскому
\cite[с.~235]{Petrovskii86} системы линейных дифференциальных уравнений в
расширенной соболевской шкале, заданной на гладком замкнутом многообразии.
Установлены теоремы об ограниченности и фредгольмовости оператора, порожденного
системой. Получены новые априорные оценки решений и исследована локальная
регулярность решений в расширенной соболевской шкале.

В этой шкале равномерно эллиптические на $\mathbb{R}^{n}$ системы дифференциальных
уравнений были изучены в \cite{09UMJ3, 12UMJ11}, а эллиптические с параметром
операторы~--- в \cite{13MFAT1}. Для более узкого класса пространств Хермандера
(уточненная соболевская шкала) теория общих эллиптических уравнений и эллиптических
краевых задач построена В.~А.~Михайлецом и вторым автором в серии работ, среди
которых укажем статьи [14--20], обзор \cite{12BJMA2} и монографию
\cite{MikhailetsMurach10}.

Отметим, что в последнее время пространства Хермандера и их различные аналоги,
именуемые пространствами обобщенной гладкости, вызывают значительный интерес как
сами по себе, так и с точки зрения приложений [22--25].

\section{Постановка задачи}\label{Sec2}
Пусть $\Gamma$~--- бесконечно гладкое замкнутое (т.~е. компактное и без края)
многообразие размерности $n\geq1$. Предполагается, что на $\Gamma$ задана некоторая
$C^{\infty}$-плотность $dx$. Линейные топологические пространства
$C^{\infty}(\Gamma)$ основных функций и $\mathcal{D}'(\Gamma)$ обобщенных функций
(распределений), заданных на $\Gamma$, рассматриваются как взаимно
ан\-тидвойственные относительно расширения по непрерывности скалярного произведения
в $L_{2}(\Gamma,dx)$. Это расширение обозначаем через $(h,\omega)_{\Gamma}$, где
$h\in\mathcal{D}'(\Gamma)$ и $\omega\in C^{\infty}(\Gamma)$. Мы рассматриваем
комплекснозначные функции и распределения, причем последние предполагаются
антилинейными функционалами.

Пусть на многообразии $\Gamma$ задана система $p\geq2$ линейных дифференциальных
уравнений
\begin{equation}\label{f2.1}
\sum _{k=1}^p A_{j,k}u_{k}=f_{j},\quad j=1,\ldots,p.
\end{equation}
Здесь $A_{j,k}$, где $j,k=1,\ldots,p$,~---  скалярные линейные дифференциальные
операторы на $\Gamma$ произвольных порядков c коэффициентами класса
$C^{\infty}(\Gamma)$. Для каждого номера $k\in\{1,\ldots,p\}$ положим
$$
m_k:=\max\{\mathrm{ord}\,A_{1,k},\ldots,\mathrm{ord}\,A_{p,k}\}.
$$
Уравнения \eqref{f2.1} понимаются в смысле теории распределений на $\Gamma$.

Запишем систему \eqref{f2.1} в матричной форме $Au=f$, где
$A:=(A_{j,k})_{j,k=1}^{p}$~--- матричный дифференциальный оператор, а
$$
u:=\mathrm{col}(u_{1},\ldots,u_{p}),\quad f:=\mathrm{col}(f_{1},\ldots,f_{p})
$$
--- функциональные столбцы.

Рассмотрим главный символ оператора~--- матрицу
$$
A^{(0)}(x,\xi):=\bigl(A_{j,k}^{(0)}(x,\xi)\bigr)_{j,k=1}^p,\quad x\in\Gamma,\;\;
\xi\in T_{x}^{\ast}\Gamma.
$$
Здесь $A_{j,k}^{(0)}(x,\xi)$ --- главный символ дифференциального оператора
$A_{j,k}$ в случае, когда $\mathrm{ord}\,A_{j,k}=m_k$, либо
$A_{j,k}^{(0)}(x,\xi)\equiv0$ в случае, когда $\mathrm{ord}\,A_{j,k}<m_k$. Как
обычно, через $T_{x}^{\ast}\Gamma$ обозначено кокасательное пространство к
многообразию $\Gamma$ в точке $x\in\Gamma$.

Предполагается, что система \eqref{f2.1} эллиптическая по Петровскому на
многообразии $\Gamma$, т.е.
\begin{equation*}
\det A^{(0)}(x, \xi)\neq0\quad\mbox{для всех}\quad x\in\Gamma,\;\;\xi\in
T_{x}^{\ast}\Gamma\setminus\{0\}
\end{equation*}
(см., например, \cite[п. 3.2 b]{Agranovich94}).

Введем функциональные пространства, в которых исследуется система \eqref{f2.1}. Они
параметризованы функциональным параметром $\varphi\in\mathrm{RO}$, где RO~---
множество всех измеримых по Борелю функций
$\varphi:[1,\infty)\rightarrow(0,\infty)$, для которых существуют числа $a>1$ и
$c\geq1$ такие, что
$$
c^{-1}\leq\frac{\varphi(\lambda t)}{\varphi(t)}\leq c\quad\mbox{для любых}\quad
t\geq1,\;\lambda\in[1,a]
$$
(вообще говоря, постоянные $a$ и $c$ зависят от $\varphi\in\mathrm{RO}$). Такие
функции называют RO (или OR)-меняющимися на бесконечности. Класс RO-меняющихся
функций введен В.~Г.~Авакумовичем в 1936 г. и достаточно полно изучен (см.
\cite{BinghamGoldieTeugels89} (п. 2.0~-- 2.2) и \cite{Seneta76} (приложение~1)).

Пусть $\varphi\in\mathrm{RO}$. Определим необходимые нам пространства сначала на
$\mathbb{R}^{n}$, а затем на многообразии $\Gamma$; при этом мы следуем \cite[п.
2.4]{MikhailetsMurach10} (см. также \cite{09Dop3}).

Линейное пространство $H^{\varphi}(\mathbb{R}^{n})$ состоит, по определению, из всех
распределений $w\in\mathcal{S}'(\mathbb{R}^{n})$ таких, что их преобразование Фурье
$\widehat{w}:=\mathcal{F}w$ локально суммируемо по Лебегу в $\mathbb{R}^{n}$ и
удовлетворяет условию
$$
\int\limits_{\mathbb{R}^{n}}\varphi^2(\langle\xi\rangle)\,|\widehat{w}(\xi)|^2\,d\xi
<\infty.
$$
Здесь, как обычно, $\mathcal{S}'(\mathbb{R}^{n})$~--- линейное топологическое
пространство Л.~Шварца медленно растущих распределений, заданных в $\mathbb{R}^{n}$,
а $\langle\xi\rangle:=(1+|\xi|^{2})^{1/2}$~--- сглаженный модуль вектора
$\xi\in\mathbb{R}^{n}$. В пространстве $H^{\varphi}(\mathbb{R}^{n})$ определено
скалярное произведение распределений $w_1$, $w_2$ по формуле
$$
(w_1,w_2)_{H^{\varphi}(\mathbb{R}^{n})}:=
\int\limits_{\mathbb{R}^{n}}\varphi^2(\langle\xi\rangle)\,
\widehat{w_1}(\xi)\,\overline{\widehat{w_2}(\xi)}\,d\xi.
$$
Оно задает на $H^{\varphi}(\mathbb{R}^{n})$ структуру гильбертова пространства и
определяет норму
$\|w\|_{H^{\varphi}(\mathbb{R}^{n})}:=(w,w)_{H^{\varphi}(\mathbb{R}^{n})}^{1/2}$.

Пространство $H^{\varphi}(\mathbb{R}^{n})$~--- гильбертов изотропный случай
прост\-ранств $B_{p,k}$, введенных и систематически исследованных Л.~Хермандером
\cite[п. 2.2]{Hermander63} (см. также \cite[п. 10.1]{Hermander83}). Именно,
$H^{\varphi}(\mathbb{R}^{n})=B_{p,k}$, если $p=2$ и
$k(\xi)=\varphi(\langle\xi\rangle)$ при $\xi\in\mathbb{R}^{n}$. Отметим, что при
$p=2$ пространства Хермандера совпадают с пространствами, введенными и изученными
Л.~Р.~Волевичем и Б.~П.~Панеяхом \cite[\S~2]{VolevichPaneah65}.

Определим теперь аналог пространства $H^{\varphi}(\mathbb{R}^{n})$ для многообразия
$\Gamma$. Выберем какой-либо конечный атлас из $C^{\infty}$-структуры на $\Gamma$,
образованный локальными картами
$\alpha_{j}:\nobreak\mathbb{R}^{n}\leftrightarrow\nobreak U_{j}$, $j=1,\ldots,q$,
где открытые множества $U_{j}$ составляют конечное покрытие многообразия $\Gamma$.
Пусть функции $\chi_{j}\in C^{\infty}(\Gamma)$, $j=1,\ldots,q$, образуют разбиение
единицы на $\Gamma$, удовлетворяющее условию $\mathrm{supp}\,\chi_{j}\subset\nobreak
U_{j}$.

Линейное пространство $H^{\varphi}(\Gamma)$ состоит, по определению, из всех
распределений $h\in\nobreak\mathcal{D}'(\Gamma)$ таких, что
$(\chi_{j}h)\circ\alpha_{j}\in H^{\varphi}(\mathbb{R}^{n})$ для каждого
$j=1,\ldots,q$. Здесь $(\chi_{j}h)\circ\alpha_{j}$~--- представление распределения
$\chi_{j}h$ в локальной карте $\alpha_{j}$. В пространстве $H^{\varphi}(\Gamma)$
определено скалярное произведение распределений $h_{1}$ и $h_{2}$  по формуле
$$
(h_{1},h_{2})_{\varphi}:=\sum_{j=1}^{q}\,((\chi_{j}h_{1})\circ\alpha_{j},
(\chi_{j}h_{2})\circ\alpha_{j})_{H^{\varphi}(\mathbb{R}^{n})}.
$$
Оно задает на $H^{\varphi}(\Gamma)$ структуру гильбертова пространства и определяет
норму $\|h\|_{\varphi}:=(h,h)_{\varphi}^{1/2}$.

Пространство $H^{\varphi}(\Gamma)$ не зависит (с точностью до эквивалентности норм)
от выбора локальных карт и разбиения единицы на~$\Gamma$ \cite[теорема
2.21]{MikhailetsMurach10}. Это пространство сепарабельно; справедливы непрерывные и
плотные вложения $C^{\infty}(\Gamma)\hookrightarrow
H^{\varphi}(\Gamma)\hookrightarrow\mathcal{D}'(\Gamma)$.

Если $\varphi(t)=t^{s}$ для всех $t\geq1$ при некотором $s\in\mathbb{R}$, то
$H^{\varphi}(\mathbb{R}^{n})=:H^{(s)}(\mathbb{R}^{n})$ и
$H^{\varphi}(\Gamma)=:H^{(s)}(\Gamma)$ есть (гильбертовы) пространства Соболева
порядка $s$, заданные на $\mathbb{R}^{n}$ и $\Gamma$ соответственно.

Класс пространств
$\{H^{\varphi}(\mathbb{R}^{n}\,\mbox{либо}\,\Gamma):\varphi\in\mathrm{RO}\}$ мы
называем расширенной соболевской шкалой на $\mathbb{R}^{n}$ либо на $\Gamma$.

Обозначим $\varrho(t):=t$ при $t\geq1$. Матричный дифференциальный оператор $A$
является ограниченным оператором
\begin{equation}\label{f2.2}
A:\,\bigoplus_{k=1}^{p}H^{\varphi\rho^{m_k}}(\Gamma)\rightarrow(H^{\varphi}(\Gamma))^{p}
\quad\mbox{для каждого}\;\;\varphi\in\mathrm{RO}
\end{equation}
(см. ниже лемму~\ref{lem4.1}). Заметим, что здесь $\varphi\rho^{m_k}\in\mathrm{RO}$;
следовательно, левое пространство в \eqref{f2.2} определено.

\section{Основные результаты}\label{Sec3}

Сформулируем результаты статьи о свойствах оператора \eqref{f2.2}; их доказательство
будет дано ниже в п.~\ref{Sec5}.

Обозначим через $A^{+}$ матричный дифференциальный оператор, формально сопряженный к
$A$ относительно формы $(\cdot,\cdot)_{\Gamma}$. Он определяется условием:
$(Au,v)_{\Gamma}=(u,A^{+}v)_{\Gamma}$ для любых вектор-функций
$u,v\in(C^\infty(\Gamma))^p$. Здесь и далее
$(f,v)_{\Gamma}:=(f_{1},v_{1})_{\Gamma}+\ldots+(f_{p},v_{p})_{\Gamma}$, если
$f=(f_{1},\ldots,f_{p})\in(\mathcal{D}'(\Gamma))^{p}$ и
$v=(v_{1},\ldots,v_{p})\in(C^\infty(\Gamma))^{p}$.

Положим
\begin{gather*}
N:=\{u\in(C^\infty(\Gamma))^p:\,Au=0\;\;\mbox{на}\;\;\Gamma\},\\
N^{+}:=\{v\in(C^\infty(\Gamma))^p:\,A^{+}v=0\;\;\mbox{на}\;\;\Gamma\}.
\end{gather*}
Поскольку системы $Au=f$ и $A^{+}v=g$ эллиптичны на $\Gamma$, то пространства $N$ и
$N^{+}$ конечномерны \cite[теорема 3.2.1]{Agranovich94}.

Напомним, что линейный ограниченный оператор $T:E_1\rightarrow E_2$, где $E_1$ и
$E_2$~--- банаховы пространства, называется фредгольмовым, если его ядро $\ker T$ и
коядро $\mathrm{coker}\,T:=E_2/T(X)$ конечномерны. У фредгольмового оператора $T$
область значений $T(X)$ замкнута в $E_2$, а индекс $\mathrm{ind}\,T:=\dim\ker
T-\dim\mathrm{coker}\,T$ конечен.

\begin{theorem}\label{th3.1}
Ограниченный оператор \eqref{f2.2} фредгольмов для любого параметра
$\varphi\in\mathrm{RO}$. Его ядро равно $N$, область значений совпадает с
подпространством
\begin{equation}\label{f3.1}
\bigl\{f\in(H^{\varphi}(\Gamma))^p:\,(f,v)_{\Gamma}=0\;\;\mbox{для всех}\;\;v\in
N^{+}\bigr\},
\end{equation}
а индекс равен $\dim N-\dim N^{+}$ и  не зависит от $\varphi$.
\end{theorem}

В случае, когда оба пространства $N$ и $N^{+}$ тривиальны, оператор \eqref{f2.2}
является изоморфизмом в силу теоремы Банаха об обратном операторе. В общей ситуации
изоморфизм удобно задавать с помощью следующих проекторов.

Пусть $\varphi\in\mathrm{RO}$. Разложим пространства, в которых действует
фредгольмов оператор \eqref{f2.2}, в прямые суммы подпространств:
\begin{equation*}
\bigoplus_{k=1}^{p}H^{\varphi\rho^{m_k}}(\Gamma)=N\dotplus
\biggl\{u\in\bigoplus_{k=1}^{p}H^{\varphi\rho^{m_k}}(\Gamma):
(u,w)_{\Gamma}=0\;\mbox{для всех}\;w\in N\biggr\},
\end{equation*}
\begin{equation*}
(H^{\varphi}(\Gamma))^p=N^{+}\dotplus\bigl\{f\in
(H^{\varphi}(\Gamma))^p:(f,v)_{\Gamma}=0\;\mbox{для всех}\;v\in N^{+}\bigr\}.
\end{equation*}
(Такие разложения существуют, поскольку в них слагаемые имеют тривиальное
пересечение, и конечная размерность первого слагаемого равна коразмерности второго.)

Обозначим через $P$ и $P^{+}$ соответственно (косые) проекторы пространств
$\bigoplus_{k=1}^p\,H^{\varphi\rho^{m_k}}(\Gamma)$ и $(H^{\varphi}(\Gamma))^p$ на
вторые слагаемые в указанных суммах параллельно первым слагаемым. Эти проекторы (как
отображения) не зависят от $\varphi$.

\begin{theorem}\label{th3.2}
Для каждого $\varphi\in\mathrm{RO}$ сужение оператора $\eqref{f2.2}$ на
подпространство $P\bigl(\bigoplus_{k=1}^p\,H^{\varphi\rho^{m_k}}(\Gamma)\bigr)$
является изоморфизмом
\begin{equation}\label{f3.2}
A:P\biggl(\,\bigoplus_{k=1}^p\,H^{\varphi\rho^{m_k}}(\Gamma)\biggr)\leftrightarrow
P^{+}\bigl((H^{\varphi}(\Gamma))^p\bigr).
\end{equation}
\end{theorem}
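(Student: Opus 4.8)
The plan is to read off the isomorphism \eqref{f3.2} from Theorem~\ref{th3.1} together with the two direct-sum decompositions recorded just before the statement, closing the argument with the Banach inverse mapping theorem. Write $\mathcal{H}^{\varphi}:=\bigoplus_{k=1}^{p}H^{\varphi\rho^{m_k}}(\Gamma)$ and $\mathcal{G}^{\varphi}:=(H^{\varphi}(\Gamma))^{p}$. By the very definition of $P$ and $P^{+}$, the spaces $P(\mathcal{H}^{\varphi})$ and $P^{+}(\mathcal{G}^{\varphi})$ are precisely the second summands in the respective decompositions; being ranges of the continuous projectors $P$ and $P^{+}$, they are closed subspaces of the Hilbert spaces $\mathcal{H}^{\varphi}$ and $\mathcal{G}^{\varphi}$, hence themselves Hilbert spaces in the induced norms. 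This is what lets me treat \eqref{f3.2} as a map between Hilbert spaces.

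First I would check that \eqref{f3.2} is a well-defined bounded operator. It is the restriction of the bounded operator \eqref{f2.2} to the closed subspace $P(\mathcal{H}^{\varphi})$, so it is bounded; and by Theorem~\ref{th3.1} the range of \eqref{f2.2} equals \eqref{f3.1}, which is exactly $P^{+}(\mathcal{G}^{\varphi})$, so $A$ does map $P(\mathcal{H}^{\varphi})$ into $P^{+}(\mathcal{G}^{\varphi})$. Next I would verify bijectivity. For injectivity: Theorem~\ref{th3.1} gives $\ker A=N$, so any $u\in P(\mathcal{H}^{\varphi})$ with $Au=0$ lies in $N\cap P(\mathcal{H}^{\varphi})$, which is trivial because $N$ and $P(\mathcal{H}^{\varphi})$ are complementary summands; thus $u=0$. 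For surjectivity: since $A$ vanishes on $N$ and $\mathcal{H}^{\varphi}=N\dotplus P(\mathcal{H}^{\varphi})$, we have $A\bigl(P(\mathcal{H}^{\varphi})\bigr)=A(\mathcal{H}^{\varphi})$, and by Theorem~\ref{th3.1} this range is \eqref{f3.1}, that is, $P^{+}(\mathcal{G}^{\varphi})$.

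Having shown that \eqref{f3.2} is a continuous linear bijection between the Hilbert spaces $P(\mathcal{H}^{\varphi})$ and $P^{+}(\mathcal{G}^{\varphi})$, I would conclude by the Banach inverse mapping theorem that its inverse is also bounded, so \eqref{f3.2} is a topological isomorphism. I do not expect a genuine obstacle, since all the analytic content is already contained in Theorem~\ref{th3.1}; the only point demanding care is that the two algebraic decompositions are in fact \emph{topological} direct sums. This is exactly what guarantees both that $P(\mathcal{H}^{\varphi})$ and $P^{+}(\mathcal{G}^{\varphi})$ are complete and that $P$, $P^{+}$ are continuous, which is needed to view \eqref{f3.2} as a map of Hilbert spaces and to apply the open mapping theorem. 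Since this continuity was established when the projectors were introduced, the argument closes.
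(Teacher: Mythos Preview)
Your proposal is correct and follows essentially the same route as the paper: use Theorem~\ref{th3.1} to identify the kernel $N$ and the range $P^{+}((H^{\varphi}(\Gamma))^p)$ of \eqref{f2.2}, deduce that the restriction \eqref{f3.2} is a bounded bijection, and invoke the Banach inverse mapping theorem. The paper's proof is simply a terser statement of exactly this argument; your version spells out the details (closedness of the summands, $N\cap P(\mathcal{H}^{\varphi})=\{0\}$, $A(P(\mathcal{H}^{\varphi}))=A(\mathcal{H}^{\varphi})$) that the paper leaves implicit.
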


Следующая теорема содержит априорную оценку решения эллиптической системы $Au=f$ в
расширенной соболевской шкале.

\begin{theorem}\label{th3.3}
Пусть заданы параметры: функциональный $\varphi\in\mathrm{RO}$ и числовой
$\sigma>0$. Тогда существует число $c=c(\varphi,\sigma)>0$ такое, что для
произвольных вектор-функций
\begin{equation}\label{f3.3}
u \in \bigoplus_{k=1}^p\,H^{\varphi\rho^{m_k}}(\Gamma)\quad\mbox{и}\quad f\in
(H^{\varphi}(\Gamma))^p,
\end{equation}
удовлетворяющих уравнению $Au=f$ на $\Gamma$, справедлива оценка
\begin{equation}\label{f3.4}
\biggl(\,\sum_{k=1}^{p}\|u_{k}\|_{{\varphi\rho}^{m_k}}^2\biggr)^{1/2}\leq
c\,\biggl(\,\sum_{j=1}^{p}\|f_{j}\|_{\varphi}^2\biggr)^{1/2}+
c\,\biggl(\,\sum_{k=1}^{p}\|u_{k}\|_{{\varphi \rho}^{m_k-\sigma}}^2\biggr)^{1/2}.
\end{equation}
\end{theorem}

Если $N=\{0\}$, то в правой части оценки \eqref{f3.4} отсутствует последнее
слагаемое.

Исследуем локальную регулярность решения эллиптической системы $Au=f$. Пусть $V$~---
произвольное открытое непустое подмножество многообразия $\Gamma$. Как и прежде,
$\varphi\in\mathrm{RO}$. Обозначим через $H_{\mathrm{loc}}^{\varphi}(V)$ линейное
топологическое пространство всех распределений $h\in\mathcal{D}'(\Gamma)$ таких, что
$\chi h\in H^{\varphi}(\Gamma)$ для любой функции $\chi\in C^\infty(\Gamma)$, у
которой $\mathrm{supp}\,\chi\subset V$. Топология в $H_{\mathrm{loc}}^{\varphi}(V)$
задается посредством полунорм $h\mapsto\|\chi h\|_{\varphi}$, где $\chi$~---
произвольная указанная функция. Если $V=\Gamma$, то
$H_{\mathrm{loc}}^{\varphi}(V)=H^{\varphi}(\Gamma)$.

\begin{theorem}\label{th3.4}
Пусть $\varphi\in\mathrm{RO}$. Предположим, что вектор-функция
$u\in(\mathcal{D}'(\Gamma))^p$ является решением уравнения $Au=f$ на открытом
множестве $V\subseteq\Gamma$, где $f\in(H_{\mathrm{loc}}^{\varphi}(V))^p$. Тогда
\begin{equation}\label{f3.5}
u\in\bigoplus_{k=1}^{p}H_{\mathrm{loc}}^{\varphi\rho^{m_k}}(V).
\end{equation}
\end{theorem}

Следующая теорема дает (в терминах расширенной соболевской шкалы) достаточное
условие существование непрерывных производных у решения эллиптической системы
$Au=f$.

\begin{theorem}\label{th3.5}
Пусть заданы целые числа $k\in\{1,\ldots,p\}$ и $r\geq0$, а функциональный параметр
$\varphi\in\mathrm{RO}$ удовлетворяет условию
\begin{equation}\label{f3.6}
\int\limits_{1}^{\infty}\,t^{2r+n-1-2m_k}\,\varphi^{-2}(t)\,dt<\infty.
\end{equation}
Предположим, что вектор-функция $u\in(\mathcal{D}'(\Gamma))^p$ является решением
уравнения $Au=f$ на открытом множестве $V\subseteq\Gamma$, где
$f\in(H_{\mathrm{loc}}^{\varphi}(V))^p$. Тогда компонента $u_k$ решения имеет на
множестве $V$ непрерывные производные порядка $\leq r$, т.~е. $u_k\in \nobreak
C^r(V)$.
\end{theorem}

Из этой теоремы вытекает следующее достаточное условие классичности решения~$u$.

\begin{theorem}\label{th3.6}
Предположим, что вектор-функция $u\in(\mathcal{D}'(\Gamma))^p$ является решением
уравнения $Au=f$ на открытом множестве $V\subseteq\Gamma$, где
$f\in(H_{\mathrm{loc}}^{\varphi}(V))^p$, а функциональный параметр
$\varphi\in\mathrm{RO}$ удовлетворяет условию
\begin{equation*}
\int\limits_{1}^{\infty}\,t^{n-1}\,\varphi^{-2}(t)\,dt<\infty.
\end{equation*}
Тогда
\begin{equation}\label{f3.7}
u\in\bigoplus_{k=1}^{p}C^{m_k}(V),
\end{equation}
т.~е. решение $u$ является классическим на~$V$.
\end{theorem}

В связи с последним результатом заметим следующее. Если выполняется условие
\eqref{f3.7}, то функция $f=Au$ вычисляется (в локальных координатах) на $V$ с
помощью классических производных, поскольку на каждую компоненту $u_{k}\in
C^{m_k}(V)$ действуют дифференциальные операторы $A_{j,k}$ порядка $\leq m_k$.

В конце этого пункта отметим, что аналоги теорем \ref{th3.1}--\ref{th3.5} доказаны
ранее в \cite{08MFAT2} для более узкого класса пространств Хермандера~--- уточненной
соболевской шкалы~--- применительно к более широкому семейству эллиптических по
Дуглису--Ниренбергу систем. Для них последняя теорема~\ref{th3.6} не имеет аналога.
Заметим, что уточненная соболевская шкала (на $\Gamma$) образована пространствами
Хермандера $H^{\varphi}(\Gamma)$, для которых параметр $\varphi$ является правильно
меняющейся на бесконечности функцией произвольного вещественного порядка
\cite{BinghamGoldieTeugels89, Seneta76}. Рассматриваемый в настоящей работе класс
$\mathrm{RO}$ параметров $\varphi$ содержит как правильно меняющиеся функции, так и
иные (не имеющие порядка изменения на бесконечности).

\section{Вспомогательные факты и результаты}\label{Sec4}

Приведем некоторые полезные нам факты; они будут использованы в доказательствах
теорем.

Отметим следующие свойства функционального класса RO (см. цитированные выше
монографии \cite{BinghamGoldieTeugels89, Seneta76}).

Функция $\varphi$ принадлежит классу $\mathrm{RO}$ тогда и только тогда, когда
$$
\varphi(t)=\exp\Biggl(\beta(t)+
\int\limits_{1}^{\:t}\frac{\gamma(\tau)}{\tau}\;d\tau\Biggr)\quad \mbox{при}\quad
t\geq1,
$$
где вещественные функции $\beta$ и $\gamma$  измеримы по Борелю и ограничены на
полуоси $[1,\infty)$.

Для любой функции $\varphi\in\mathrm{RO}$ существуют числа
$s_{0},s_{1}\in\mathbb{R}$, $s_{0}\leq s_{1}$, и $c_{0},c_{1}>0$ такие, что
\begin{equation}\label{f4.1}
c_{0}\lambda^{s_{0}}\leq\frac{\varphi(\lambda t)}{\varphi (t)}\leq
c_{1}\lambda^{s_{1}} \quad\mbox{для всех}\quad t\geq1,\;\;\lambda\geq1.
\end{equation}

Положим
\begin{gather*}
\sigma_{0}(\varphi):=
\sup\,\{s_{0}\in\mathbb{R}:\,\mbox{верно левое неравенство в \eqref{f4.1}}\},\\
\sigma_{1}(\varphi):=\inf\,\{s_{1}\in\mathbb{R}:\,\mbox{верно правое неравенство в
\eqref{f4.1}}\}.
\end{gather*}
Числа $\sigma_{0}(\varphi)$ и $\sigma_{1}(\varphi)$ конечны и называются
соответственно нижним и верхним индексом Матушевской \cite{Matuszewska64} функции
$\varphi\in\mathrm{RO}$.

Далее рассмотрим ряд свойств расширенной соболевской шкалы на многообразии~$\Gamma$.

Пусть $\varphi,\varphi_{1}\in\mathrm{RO}$. Функция $\varphi(t)/\varphi_{1}(t)$
ограничена в окрестности $+\infty$ тогда и только тогда, когда
$H^{\varphi_{1}}(\Gamma)\hookrightarrow H^{\varphi}(\Gamma)$; это вложение
непрерывно и плотно. Оно компактно тогда и только тогда, когда
$\varphi(t)/\varphi_{1}(t)\rightarrow0$ при $t\rightarrow+\infty$.

В частности, выполняются компактные и плотные вложения
\begin{equation}\label{f4.2}
H^{(s_1)}(\Gamma)\hookrightarrow H^{\varphi}(\Gamma)\hookrightarrow
H^{(s_0)}(\Gamma)\;\;\mbox{для всех}\;\;
s_{1}>\sigma_{1}(\varphi),\;s_{0}<\sigma_{0}(\varphi).
\end{equation}

Пусть заданы целое число $r\geq0$ и функциональный параметр $\omega\in\mathrm{RO}$.
Тогда
\begin{equation}\label{f4.3}
\int\limits_{1}^{\infty}\,t^{2r+n-1}\,\omega^{-2}(t)\,dt<\infty\;\Leftrightarrow\;
H^{\omega}(\Gamma)\hookrightarrow C^{r}(\Gamma),
\end{equation}
причем вложение непрерывно. Здесь, как обычно, $C^{r}(\Gamma)$~--- пространство
Гельдера на $\Gamma$ порядка $r$.

Эти свойства вытекают из соответствующих свойств пространств Хермандера $B_{2,k}$
\cite[п. 2.2]{Hermander63} (сравнить с \cite[п. 2.1.2, 2.1.4]{MikhailetsMurach10}).

В связи с \eqref{f4.3} отметим следующее. Если
$H^{\omega}(\Gamma)=H^{(s)}(\Gamma)$~--- пространство Соболева порядка $s$, т.~е.
$\omega(t)=t^{s}$ при $t\geq1$, то левая часть формулы \eqref{f4.3} равносильна
неравенству $s>r+n/2$, и мы приходим к теореме вложения Соболева.

Фундаментальное (и необходимое нам) свойство расширенной соболевской шкалы состоит в
том, что каждое пространство $H^{\varphi}(\Gamma)$, где $\varphi\in\mathrm{RO}$,
есть результат интерполяции с подходящим функциональным параметром пары соболевских
пространств $H^{(s_0)}(\Gamma)$ и $H^{(s_1)}(\Gamma)$, фигурирующих в \eqref{f4.2}.
Напомним определение этой интерполяции в случае общих гильбертовых пространств и
некоторые ее свойства (cм., например, монографию \cite[п. 1.1]{MikhailetsMurach10}).
Для наших целей достаточно ограничиться сепарабельными пространствами.

Пусть задана упорядоченная пара $X:=[X_{0},X_{1}]$ сепарабельных комплексных
гильбертовых пространств $X_{0}$ и $X_{1}$ такая, что выполняется непрерывное и
плотное вложение $X_{1}\hookrightarrow X_{0}$. Пару $X$ называем допустимой. Для нее
существует изометрический изоморфизм $J:X_{1}\leftrightarrow X_{\,0}$ такой, что
$J$~--- самосопряженный положительно определенный оператор в пространстве $X_{0}$ с
областью определения $X_{1}$. Оператор $J$ определяется парой $X$ однозначно; он
называется порождающим для $X$.

Обозначим через $\mathcal{B}$ множество всех измеримых по Борелю функций
$\psi:(0,\infty)\rightarrow(0,\infty)$, которые отделены от нуля на каждом множестве
$[r,\infty)$ и ограниченны на каждом отрезке $[a,b]$, где $r>0$ и $0<a<b<\infty$.

Пусть $\psi\in\mathcal{B}$. В пространстве $X_{0}$ определен, как функция от $J$,
оператор $\psi(J)$. Обозначим через $[X_{0},X_{1}]_\psi$ или, короче, $X_{\psi}$
область определения оператора $\psi(J)$, наделенную скалярным произведением $(w_1,
w_2)_{X_\psi}:=(\psi(J)w_1,\psi(J)w_2)_{X_0}$ и соответствующей нормой
$\|w\|_{X_\psi} = (w,w)_{X_\psi}^{1/2}$. Пространство $X_\psi$ гильбертово и
сепарабельно, причем выполняется непрерывное и плотное вложение $X_\psi
\hookrightarrow X_0$.

Функцию $\psi\in\mathcal{B}$ называем интерполяционным параметром, если для
произвольных допустимых пар $X=[X_0, X_1]$, $Y=[Y_0, Y_1]$ гильбертовых пространств
и для любого линейного отображения $T$, заданного на $X_0$, выполняется следующее.
Если при каждом $j\in\{0,1\}$ сужение отображения $T$ на пространство $X_{j}$
является ограниченным оператором $T:X_{j}\rightarrow Y_{j}$, то и сужение
отображения $T$ на пространство $X_\psi$ является ограниченным оператором
$T:X_{\psi}\rightarrow Y_{\psi}$. Тогда будем говорить, что пространство $X_\psi$
получено интерполяцией с функциональным параметром $\psi$ пары $X$.

Функция $\psi\in\mathcal{B}$ является интерполяционным параметром тогда и только
тогда, когда она псевдовогнута в окрестности бесконечности, т.~е.
$\psi(t)\asymp\psi_{1}(t)$ при $t\gg1$ для некоторой положительной вогнутой функции
$\psi_{1}(t)$. Это вытекает из теоремы Ж.~Питре \cite{Peetre66, Peetre68} об
описании всех интерполяционных функций положительного порядка. (Как обычно,
$\psi\asymp\psi_{1}$ обозначает ограниченность обоих отношений $\psi/\psi_{1}$ и
$\psi_{1}/\psi$ на указанном множестве.)

Необходимое нам интерполяционное свойство расширенной соболевской шкалы~---
нижеследующее \cite[теорема 2.22]{MikhailetsMurach10}.

\begin{proposition}\label{prop4.1}
Пусть заданы функция $\varphi\in\mathrm{RO}$ и вещественные числа $s_0$, $s_1$
такие, что $s_0<\sigma_0(\varphi)$ и $s_1>\sigma_1(\varphi)$. Положим
\begin{equation}\label{f4.4}
\psi(t):=
\begin{cases}
\;t^{{-s_0}/{(s_1-s_0)}}\,
\varphi\bigl(t^{1/{(s_1-s_0)}}\bigr)&\text{при}\quad t\geq1, \\
\;\varphi(1)&\text{при}\quad0<t<1.
\end{cases}
\end{equation}
Тогда функция $\psi\in\mathcal{B}$ является интерполяционным параметром, и
\begin{equation}\label{f4.5}
[H^{(s_0)}(\Gamma),H^{(s_1)}(\Gamma)]_{\psi}=H^{\varphi}(\Gamma)
\end{equation}
с эквивалентностью норм.
\end{proposition}

Отметим \cite[теорема 2.24]{MikhailetsMurach10}, что расширенная соболевская шкала
на $\Gamma$ совпадает (с точностью до эквивалентности норм) с классом всех
гильбертовых пространств, интерполяционных для пар соболевских пространств
$[H^{(s_0)}(\Gamma),H^{(s_1)}(\Gamma)]$, где $s_0,s_1\in\mathbb{R}$ и $s_0<s_1$.
(Аналогичный факт верен и для расширенной соболевской шкалы на $\mathbb{R}^{n}$.)
Это следует из теоремы В.~И.~Овчинникова \cite[п.~11.4]{Ovchinnikov84} об описании
всех гильбертовых пространств, интерполяционных для произвольно заданной совместимой
пары гильбертовых пространств. Напомним, что свойство (гильбертового) пространства
$H$ быть интерполяционным для допустимой пары $X=[X_0, X_1]$ означает следующее: а)
выполняются непрерывные вложения $X_1\hookrightarrow H\hookrightarrow X_0$, б)
всякий линейный оператор, ограниченный на каждом из пространств $X_0$ и $X_1$,
является ограниченным и на $X$.

Нам понадобится следующий факт, вытекающий из предложения \ref{prop4.1}.

\begin{lemma}\label{lem4.1}
Пусть $L$~--- произвольный линейный дифференциальный оператор на $\Gamma$ порядка
$l$ с коэффициентами класса $C^{\infty}(\Gamma)$. Он является ограниченным
оператором
$$
L:\,H^{\varphi\varrho^{l}}(\Gamma)\rightarrow H^{\varphi}(\Gamma)\quad\mbox{для
любого}\;\;\varphi\in\mathrm{RO}.
$$
\end{lemma}

\begin{proof}
Пусть $\varphi\in\mathrm{RO}$, а числа $s_{0}$, $s_{1}$ и интерполяционный параметр
$\psi$ такие как в предложении \ref{prop4.1}. Рассмотрим ограниченные операторы
$$
L:\,H^{(s_{j}+l)}(\Gamma)\rightarrow H^{(s_{j})}(\Gamma)\quad\mbox{для}\quad j=0,1,
$$
действующие в соболевских пространствах. Применив интерполяцию с функциональным
параметром $\psi$, получим в силу предложения \ref{prop4.1} требуемый ограниченный
оператор
\begin{multline*}
L:\,H^{\varphi\varrho^{l}}(\Gamma)=
\bigl[H^{(s_0+l)}(\Gamma),H^{(s_1+l)}(\Gamma)\bigr]_\psi\\
\rightarrow\bigl[H^{(s_0)}(\Gamma),H^{(s_1)}(\Gamma)\bigr]_\psi=H^{\varphi}(\Gamma).
\end{multline*}
Заметим, что здесь первое равенство верно, поскольку
$s_{0}+l<\sigma_{0}(\varphi\rho^{l})$ и $s_{1}+l>\sigma_{1}(\varphi\rho^{l})$, а
функциональный параметр $\psi$ удовлетворяет соотношению $\eqref{f4.4}$, если в нем
заменить $s_0$ на $s_0+l$, $s_1$ на $s_1+l$ и $\varphi$ на $\varphi\rho^{l}$.
\end{proof}

При интерполяции наследуется не только ограниченность, но и фредгольмовость линейных
операторов при некоторых дополнительных условиях. Сформулируем этот результат
применительно к рассмотренному методу интерполяции \cite[теорема
1.7]{MikhailetsMurach10}.

\begin{proposition}\label{prop4.3}
Пусть $X=[X_0,X_1]$ и $Y=[Y_0,Y_1]$~--- допустимые пары гильбертовых пространств.
Пусть, кроме того, на $X_0$ задано линейное отображение $T$ такое, что его сужения
на пространства $X_j$, где $j=0,1$, являются ограниченными фредгольмовым операторами
$T:X_j\rightarrow Y_j$, имеющими общее ядро и одинаковый индекс. Тогда для
произвольного интерполяционного параметра $\psi\in\mathcal{B}$ ограниченный оператор
$T:X_\psi\rightarrow Y_\psi$ фредгольмов с теми же ядром и индексом, а его область
значений равна $Y_\psi\cap T(X_0)$.
\end{proposition}

\section{Доказательство основных результатов}\label{Sec5}

Докажем свойства эллиптической системы $Au=f$, сформулированные в п.~\ref{Sec3}.

\begin{proof}[Доказательство теоремы $\ref{th3.1}$]
В соболевском случае $\varphi=\varrho^{s}$, где произвольно выбрано
$s\in\mathbb{R}$, эта теорема известна (см., например, \cite[теорема
3.2.1]{Agranovich94} или \cite[теорема 8.69]{WlokaRowleyLawruk95}). Докажем ее для
любого $\varphi\in\mathrm{RO}$ с помощью интерполяции с функциональным параметром.

Выберем числа $s_{0}<\sigma_{0}(\varphi)$, $s_{1}>\sigma_{1}(\varphi)$ и рассмотрим
ограниченные фредгольмовы операторы
\begin{equation}\label{f5.1}
A:\,\bigoplus_{k=1}^{p}H^{(s_j+m_k)}(\Gamma)\rightarrow
\bigl(H^{(s_j)}(\Gamma)\bigr)^p\quad\mbox{для}\quad j=0,1,
\end{equation}
действующие в пространствах Соболева. Эти операторы имеют общее ядро $N$, одинаковый
индекс, равный $\dim N-\dim N^{+}$, и области значений
\begin{equation}\label{f5.2}
A\biggl(\,\bigoplus_{k=1}^{p}H^{(s_j+m_k)}(\Gamma)\biggr)
=\Bigl\{f\in\bigl(H^{(s_j)}(\Gamma)\bigr)^p:\,(f,v)_{\Gamma}=0\;\;\mbox{для
всех}\;\;v\in N^{+}\Bigr\}.
\end{equation}

Определим интерполяционный параметр $\psi$ по формуле $\eqref{f4.4}$. Согласно
предложению~\ref{prop4.3} фредгольмовость операторов \eqref{f5.1} влечет за собой
фредгольмовость ограниченного оператора
\begin{equation}\label{f5.3}
A:\, \biggl[\,\bigoplus_{k=1}^{p}H^{(s_0+m_k)}(\Gamma),\,
\bigoplus_{k=1}^{p}H^{(s_1+m_k)}(\Gamma)\biggr]_\psi
\rightarrow\bigl[\bigl(H^{(s_0)}(\Gamma)\bigr)^p,\bigl(H^{(s_1)}(\Gamma)
\bigr)^p\,\bigr]_\psi.
\end{equation}
Здесь в силу предложения \ref{prop4.1} получаем следующие равенства пространств с
эквивалентностью норм в них:
\begin{gather*}
\biggl[\,\bigoplus_{k=1}^{p}H^{(s_0+m_k)}(\Gamma),\,
\bigoplus_{k=1}^{p}H^{(s_1+m_k)}(\Gamma)\biggr]_\psi\\
=\,\bigoplus_{k=1}^{p}\bigl[H^{(s_0+m_k)}(\Gamma),H^{(s_1+m_k)}(\Gamma)\bigr]_\psi=
\,\bigoplus_{k=1}^{p}H^{\varphi\rho^{m_k}}(\Gamma),
\end{gather*}
\begin{equation*}
\bigl[\,\bigl(H^{(s_0)}(\Gamma)\bigr)^p,
\bigl(H^{(s_1)}(\Gamma)\bigr)^p\,\bigr]_\psi
=\bigl(\bigl[H^{(s_0)}(\Gamma),H^{(s_1)}(\Gamma)\bigr]_\psi\bigr)^p=
\bigl(H^\varphi(\Gamma)\bigr)^p.
\end{equation*}
(Левые равенства в этих формулах следуют из определения интерполяции; см., например,
\cite[теорема~1.5]{MikhailetsMurach10}.)

Таким образом, \eqref{f2.2}~--- это фредгольмов оператор \eqref{f5.3}, который в
силу предложения \ref{prop4.3} имеет те же ядро $N$ и индекс $\dim N-\dim N^{+}$,
что и операторы \eqref{f5.1}. Кроме того, область значений оператора \eqref{f2.2}
совпадает с пространством
$$
\bigl(H^\varphi(\Gamma)\bigr)^p\cap
A\biggl(\,\bigoplus_{k=1}^{p}H^{(s_0+m_k)}\biggr),
$$
равным \eqref{f3.1} ввиду \eqref{f5.2}.
\end{proof}

\begin{proof}[Доказательство теоремы $\ref{th3.2}$]
Пусть $\varphi\in\mathrm{RO}$. По теореме \ref{th3.1}, $N$ является ядром, а
$P^{+}((H^{\varphi}(\Gamma))^p)$~--- областью значений оператора \eqref{f2.2}.
Следовательно оператор \eqref{f3.2}~--- биекция. Кроме того, этот оператор
ограничен. Значит, он является изоморфизмом в силу теоремы Банаха об обратном
операторе.
\end{proof}

\begin{proof}[Доказательство теоремы $\ref{th3.3}$]
Обозначим для краткости
$$
\mathcal{X}:=\bigoplus_{k=1}^{p}H^{\varphi\rho^{m_k}}(\Gamma),\quad
\mathcal{Y}:=(H^{\varphi}(\Gamma))^p\quad\mbox{и}\quad
\mathcal{Z}:=\bigoplus_{k=1}^{p}H^{\varphi\rho^{m_k-\sigma}}(\Gamma).
$$
Пусть вектор-функции \eqref{f3.3} такие, что $Au=f$ на $\Gamma$. Для них требуемая
оценка \eqref{f3.4} следует из неравенств
$$
\|Pu\|_{\mathcal{X}}\leq c_{1}\|f\|_{\mathcal{Y}}\quad\mbox{и}\quad
\|(1-P)u\|_{\mathcal{X}}\leq c_{2}\|u\|_{\mathcal{Z}}.
$$
Здесь $c_1$~--- норма оператора, обратного к \eqref{f3.2} (заметим, что $APu=f)$, а
$c_2$~--- норма оператора $1-P:\mathcal{Z}\rightarrow N$, где $N$ рассматривается
как конечномерное подпространство в $\mathcal{X}$.
\end{proof}

\begin{proof}[Доказательство теоремы $\ref{th3.4}$]
Пространство $\mathcal{D}'(\Gamma)$ является объе\-динением соболевских пространств
$H^{(s)}(\Gamma)$, где $s\in\mathbb{R}$. Поэтому ввиду \eqref{f4.2} выполняется
включение
\begin{equation}\label{f5.4}
u\in\bigoplus_{k=1}^{p}H^{\varphi\rho^{m_k-\sigma}}(\Gamma)\quad\mbox{для
некоторого}\quad \sigma>0.
\end{equation}

Предварительно рассмотрим случай, когда $V=\Gamma$ (к нему сводится доказательство
теоремы в общей ситуации). В силу теоремы \ref{th3.1} имеем:
$$
(H^\varphi(\Gamma))^p \cap
A\biggl(\,\bigoplus_{k=1}^{p}H^{\varphi\rho^{m_k-\sigma}}(\Gamma)\biggr)=
A\biggl(\,\bigoplus_{k=1}^{p}H^{\varphi\rho^{m_k}}(\Gamma)\biggr).
$$
Поэтому, из условия $f \in (H^\varphi(\Gamma))^p$ и включения \eqref{f5.4} вытекает,
что
$$
f = Au \in A\biggl(\,\bigoplus_{k=1}^{p}H^{\varphi\rho^{m_k}}(\Gamma)\biggr).
$$
Таким образом, на $\Gamma$ наряду с равенством $Au=f$ выполняется также равенство
$Av=f$ для некоторого $v\in \bigoplus_{k=1}^{p}H^{\varphi\rho^{m_k}}(\Gamma)$.
Следовательно, $A(u-v)=0$ на $\Gamma$; отсюда $w:=u-v\in
N\subset(C^\infty(\Gamma))^p$ ввиду теоремы \ref{th3.1}. Поэтому $u=v+w \in
\bigoplus_{k=1}^{p}H^{\varphi\rho^{m_k}}(\Gamma)$, и теорема \ref{th3.4} доказана в
случае $V=\Gamma$.

Рассмотрим теперь общую ситуацию. Покажем, что из условия
$f\in(H^{\varphi}_{\mathrm{loc}}(V))^p$ вытекает следующее свойство повышения
регулярности решения уравнения $Au=f$ на $V$: для каждого числа $r \geq 1$
справедлива импликация
\begin{equation}\label{f5.5}
u\in\bigoplus_{k=1}^{p}H_{\mathrm{loc}}^{\varphi\rho^{m_k-r}}(V)\Rightarrow
u\in\bigoplus_{k=1}^{p}H_{\mathrm{loc}}^{\varphi\rho^{m_k -r+1}}(V).
\end{equation}

Произвольно выберем функцию $\chi\in C^{\infty}(\Gamma)$ такую, что
$\mathrm{supp}\,\chi\subset V$. Выберем также функцию $\eta\in C^{\infty}(\Gamma)$,
удовлетворяющую условиям: $\mathrm{supp}\,\eta\subset V$ и $\eta\equiv1$ в
окрестности $\mathrm{supp}\,\chi$. Переставив матричный дифференциальный оператор
$A$ и оператор умножения на функцию $\chi$, можем записать:
\begin{equation}\label{f5.6}
A(\chi u)=A(\chi\eta u)=\chi A(\eta u)+ A'(\eta u)=\chi f+A'(\eta u)\quad\mbox{на}
\quad\Gamma.
\end{equation}
Здесь $A'=(A'_{j,k})_{j,k=1}^p$~--- некоторый матричный дифференциальный оператор с
коэффициентами класса $C^{\infty}(\Gamma)$, удовлетворяющий условию
$\mathrm{ord}\,A'_{j,k}\leq \mathrm{ord}\,A_{j,k}-1$. Тогда
$\mathrm{ord}\,A'_{j,k}\leq m_k-1$ и ввиду леммы \ref{lem4.1} получаем:
\begin{equation}\label{f5.7}
u\in\bigoplus_{k=1}^{p}H_{\mathrm{loc}}^{\varphi\rho^{m_k-r}}(V) \Rightarrow A'\eta
u\in\bigl(H^{\varphi\rho^{-r+1}}(\Gamma)\bigr)^p.
\end{equation}
Кроме того, поскольку $f\in(H_{\mathrm{loc}}^{\varphi}(V))^p$ и $r\geq1$, то
\begin{equation}\label{f5.8}
\chi f \in (H^{\varphi}(\Gamma))^p
\subseteq\bigl(H^{\varphi\rho^{-r+1}}(\Gamma)\bigr)^p.
\end{equation}

На основании формул \eqref{f5.6}, \eqref{f5.7}, \eqref{f5.8} и настоящей теоремы,
уже доказанной в случае $V=\Gamma$, имеем:
\begin{equation*}
u\in\bigoplus_{k=1}^{p}H_{\mathrm{loc}}^{\varphi\rho^{m_k-r}}(V)
\Rightarrow A(\chi u)\in\bigl(H^{\varphi\rho^{-r+1}}(\Gamma)\bigr)^p
\Rightarrow \chi u \in \bigoplus_{k=1}^{p}H^{\varphi\rho^{-r+1}\rho^{m_k}}(\Gamma).
\end{equation*}
Тем самым доказано $\eqref{f5.5}$ ввиду произвольности указанного выбора
функции~$\chi$.

Теперь при помощи \eqref{f5.4} и \eqref{f5.5} легко вывести включение \eqref{f3.5}.
В~\eqref{f5.4} можно считать, что $\sigma$~--- натуральное число. Применяя
\eqref{f5.5} последовательно для значений $r=\sigma$, $r=\sigma-1$, ..., $r=1$,
выводим требуемое включение:
\begin{gather*}
u\in\bigoplus_{k=1}^{p}H^{\varphi\rho^{m_k-\sigma}}(\Gamma)\subset
\bigoplus_{k=1}^{p}H_{\mathrm{loc}}^{\varphi \rho ^{m_k-\sigma}}(V)\\
\Rightarrow u \in \bigoplus_{k=1}^{p}H_{\mathrm{loc}}^{\varphi \rho
^{m_k-\sigma+1}}(V) \Rightarrow \ldots\Rightarrow u \in
\bigoplus_{k=1}^{p}H_{\mathrm{loc}}^{\varphi \rho ^{m_k}}(V).
\end{gather*}
\end{proof}

\begin{proof}[Доказательство теоремы $\ref{th3.5}$]
Согласно теореме \ref{th3.4} верно включение $u_k\in
H_{\mathrm{loc}}^{\varphi\rho^{m_k}}(V)$. Выберем произвольно точку $x\in V$ и
функцию $\chi\in C^{\infty}(\Gamma)$ такую, что $\mathrm{supp}\,\chi\subset V$ и
$\chi\equiv1$ в некоторой окрестности точки $x$. Тогда в силу условия \eqref{f3.6} и
свойства \eqref{f4.3}, где берем $\omega(t):=\varphi(t)t^{m_{k}}$ при $t\geq1$,
получаем:
$$
\chi u_k\in H^{\varphi\rho^{m_k}}(\Gamma)\subset C^{r}(\Gamma).
$$
Отсюда ввиду произвольности выбора точки $x\in V$ следует требуемое включение
$u_k\in C^{r}(V)$.
\end{proof}

\medskip

\end{document}